\numberwithin{equation}{section}
\newtheoremstyle{thmlemcorr}{10pt}{10pt}{\itshape}{}{\bfseries}{.}{10pt}{{\thmname{#1}\thmnumber{ #2}\thmnote{ (#3)}}}
\newtheoremstyle{thmlemcorr*}{10pt}{10pt}{\itshape}{}{\bfseries}{.}\newline{{\thmname{#1}\thmnumber{ #2}\thmnote{ (#3)}}}
\newtheoremstyle{remexample}{10pt}{10pt}{}{}{\bfseries}{.}{10pt}{{\thmname{#1}\thmnumber{ #2}\thmnote{ (#3)}}}
\newtheoremstyle{ass}{10pt}{10pt}{}{}{\bfseries}{.}{10pt}{{\thmname{#1}\thmnumber{ A#2}\thmnote{ (#3)}}}
\theoremstyle{thmlemcorr}
\newtheorem{theorem}{Theorem}
\numberwithin{theorem}{section}
\newtheorem{lemma}[theorem]{Lemma}
\theoremstyle{remexample}
\newtheorem{remark}[theorem]{Remark}
\theoremstyle{ass}
\newcommand{\Dcal}{\mathcal{D}}
\newcommand{\T}{\mathbb{T}}
\DeclareMathOperator{\diverg}{div}
\DeclareMathOperator{\supp}{supp}
\newcommand{\norm}[1]{\|#1\|}
\newcommand{\R}{\mathbb{R}}
\newcommand{\Ocal}{\mathcal{O}}
\newcommand{\eps}{\epsilon}
\newcommand{\dx}{ dx}
\newcommand{\ueps}{u*\eta^\eps}
\newcommand{\pa}{\partial_\alpha}
\newcommand{\flux}{G_{i\alpha}}
\newcommand{\intO}{\int_{\Omega}}
\renewcommand{\eps}{\varepsilon}
\renewcommand{\epsilon}{\varepsilon}
\renewcommand{\phi}{\varphi}
\def\Xint#1{\mathchoice
{\XXint\displaystyle\textstyle{#1}}%
{\XXint\textstyle\scriptstyle{#1}}%
{\XXint\scriptstyle\scriptscriptstyle{#1}}%
{\XXint\scriptscriptstyle\scriptscriptstyle{#1}}%
\!\int}
\def\XXint#1#2#3{{\setbox0=\hbox{$#1{#2#3}{\int}$}
\vcenter{\hbox{$#2#3$}}\kern-.5\wd0}}
\def\dashint{\Xint-}
\begin{document}


\title[Entropy conservation]{On entropy conservation for general systems of conservation laws}

\author{Tomasz D\k{e}biec}
\address{\textit{Tomasz D\k{e}biec:}  Institute of Applied Mathematics and Mechanics, University of Warsaw, Banacha 2, 02-097 Warsaw, Poland}
\email{t.debiec@mimuw.edu.pl}

\begin{abstract}
In this work we consider companion conservation laws to general systems of conservation laws. We investigate sufficient regularity for weak solutions to satisfy companion laws, assuming the fluxes to be $C^{1,\gamma}$, $0<\gamma<1$, regular. We discuss applications of the general framework to nonlinear elasticity and compressible Euler system.
\end{abstract}

\vspace{6pt}

\maketitle

\noindent\textsc{MSC (2010): 35L65 (primary); 35Q31, 74B20}

\noindent\textsc{Keywords: Conservation laws, Conservation of entropy, Onsager's conjecture}


\section{Introduction}

Systems of differential equations describing the evolution of physical phenomena usually come with one or several conserved quantities, like entropy or energy. Such a formally conserved quantity may, however, fail to be conserved by a low--regular weak solution of the system. 
Recent years have seen intense research efforts investigating the relation between conservation of energy/entropy and regularity of weak solutions to a given physical system of equations. Notably, a general class of first--order conservation laws was considered in~\cite{GMSG}, and in~\cite{BaGwSGTiWi} on bounded domains.

A classical example of such a study is the Onsager's conjecture in the context of ideal fluids. It states that a weak solution of the three--dimensional incompressible Euler system will conserve energy if it is H\"older regular with exponent greater than $1\slash3$. Otherwise it is possible for anomalous dissipation of energy to occur. 
Both these assertions have been proved to be true: the former was first considered by Eyink~\cite{E94} and then fully proved by  Constantin et.\ al.\ ~\cite{CoETi94}, while the latter was recently shown by Isett~\cite{Is2018} and Buckmaster et.\ al.\ ~\cite{BuckmasterEtAl}. Investigating the possibility of analogous statements for other systems (particularly for compressible fluid dynamics) has become a lively direction of research.
Sufficient regularity conditions for the energy to be conserved were studied for a number of models: inhomogeneous incompressible Euler~\cite{ChenYu} and Navier--Stokes~\cite{LeSh2016}, compressible Euler~\cite{FGSW}, the full Euler system~\cite{DrEy2018}, compressible Navier--Stokes~\cite{Yu2017}, or Euler--Korteweg~\cite{DGST}.

In this paper we consider the general system of $n$ conservation laws
\begin{equation}\label{eq:ConsSystem}
    \partial_{\alpha} G_{i\alpha}(u(x)) = 0 
\end{equation}
 where $u:\Omega\to\Ocal$ is an unknown quantity and $G_{i\alpha}:\Ocal\to\R$,\; $i=1,\dots,n$,\; $\alpha=0,\dots,d$ are the fluxes. We assume that $\Omega\subset\R^{d+1}$ is open and $\Ocal\subset\R^n$ is open and convex. In the above $x=(x_0,\dots,x_d)$, $\partial_\alpha=\partial_{x_\alpha}$, and we use the summation convention. 
We are concerned with the validity of the following \emph{companion law}
\begin{equation}\label{eq:CL}
    \partial_\alpha Q_\alpha(u(x)) = 0,
\end{equation}
where $Q_\alpha:\Ocal\to\R$,\; $\alpha=0,\dots,d$.
We require that $Q_\alpha$ be related to $G_{i\alpha}$ via the existence of smooth functions $B_i:\Ocal\to\R$,\; $i=1,\dots,n$ such that the following compatibility conditions are met
\begin{equation}\label{eq:Compatibility}
    \partial_jQ_\alpha(u) = B_i(u)\partial_jG_{i\alpha}(u),\;\;\; \text{for $j=1,\dots,n$, $\alpha=0,\dots,d$}.
\end{equation}
Further, we assume throughout that for some $\gamma\in(0,1)$ the fluxes $Q_\alpha$ of~\eqref{eq:CL} satisfy the growth condition
\begin{equation}\label{eq:growth}
    |Q_\alpha(u)| \leq C(1+|u|^{2+\gamma}),\quad\text{for each}\; u\in\Ocal.
\end{equation}

\noindent In the fluid mechanics applications mentioned above the \emph{companion law}~\eqref{eq:CL} represents the energy equality.
It is well known that~\eqref{eq:CL} holds for any classical solution $u$ of~\eqref{eq:ConsSystem}, i.e.\ for any Lipschitz vector field $u$ which satisfies~\eqref{eq:ConsSystem} for a.e.\ $x\in\Omega$. 
Indeed, this follows from multiplying~\eqref{eq:ConsSystem} by $B_i(u)$, summing over $i=1,\dots,n$, and invoking the chain rule.
However, if $u$ is merely an irregular \emph{weak solution}, i.e.\ satisfying
\begin{equation}
    \intO G_{i\alpha}(u(x))\partial_\alpha\phi(x)\ dx = 0,\;\;\;\text{for all}\;\phi\in C_c^1(\Omega),\;\;i=1,\dots,n,
\end{equation}
then the additional conservation law~\eqref{eq:CL} can break down, even in the scalar case. For hyperbolic systems this is typically due to shock waves, giving rise to discontinuous solutions.
It is found in~\cite{GMSG} that if $B_i\in W^{1,\infty}$ and $G_{i\alpha}$ have bounded second derivatives, then any solution $u\in B_{3,\infty}^{s}$ to~\eqref{eq:ConsSystem} will satisfy~\eqref{eq:CL}, provided $s>1\slash3$.
Importantly, this abstract framework covers particular physical first--order systems (Euler equations, magnetohydrodynamics, polyconvex elasticity).
In this paper we focus on providing a sufficient regularity condition so that a weak solution conserves the companion quantity, assuming the fluxes to be no more regular than $C^{1,\gamma}$ for some $0<\gamma<1$.
Of course, at this level of generality one cannot hope for the assumptions to be universally optimal, but with the extra structure of a problem under consideration one can improve the statement (e.g.\ require Besov regularity only with respect to some variables). Furthermore, in~\cite{BaGwSGTiWi2} a similar extension of Onsager's conjecture to general conservation laws is done in presence of physical boundaries, introducing a more optimal Besov-$VMO$ type space, see also~\cite{BaGwSGTiWi, BaTi, BaTiWi}.

\medskip

In the known works on the subject~\cite{FGSW, GMSG, BaGwSGTiWi} it is a crucial assumption that the nonlinearities be of class $C^2$ in the range of the dependent variables. This allows to treat them as quadratic expressions in the relevant commutator estimates.
Our objective in this work is to investigate sharp regularity assumptions which would allow to guarantee conservation of companion quantities for a general first--order system of conservation laws whose flux is H\"older--continuously differentiable. One important example to keep in mind is nonlinear elasticity, where the gradient of the stored energy functional is often not a $C^2$ function.
Further, one can think of the compressible Euler system with vacuum as done recently in~\cite{ADSW}.

One of the major differences between incompressible and compressible fluid dynamics is the possible formation of \emph{vacuum} in the latter case, i.e.\ density of the fluid becoming zero in some region. Consider the isentropic compressible Euler system  
\begin{equation}\label{eq:compressibleEuler}
\begin{aligned}
\partial_t(\rho v)+\diverg(\rho v\otimes v)+\nabla p(\rho)&=0,\\
\partial_t\rho+\diverg(\rho v)&=0.
\end{aligned}
\end{equation}
It is classically known that conservation laws like~\eqref{eq:compressibleEuler} may develop singularities (shocks) in finite time, which prohibits the use of a smooth notion of solution. Rather, one works with solutions in the sense of distributions, which may be very rough.
For a smooth solution, with density initially bounded away from zero, it would easily follow from the continuity equation (see e.g.\ \cite{DiLi1989}) that $\rho$ remains bounded away from zero for all times.
More precisely, this requires $v$ to have bounded divergence. However, there seems to be no way to guarantee that the velocity component of a weak solution of~\eqref{eq:compressibleEuler} has bounded divergence, and thus it can not be excluded that the solution spontaneously develops vacuum in finite time. In fact, to our knowledge it remains an outstanding open question whether this can actually occur for the compressible Euler or even Navier--Stokes equations.  

The formation of vacuum constitutes a degeneracy that, in many situations, vastly complicates the mathematical analysis of compressible models. For the system~\eqref{eq:compressibleEuler}, a typical and physically reasonable pressure law is the polytropic pressure $p(\rho)=\rho^{\gamma_0}$ with $\gamma_0>1$. However, in the most relevant regime $1<\gamma_0<2$ the second derivative, being of order $\rho^{\gamma_0-2}$, blows up at zero.
In~\cite{ADSW} a number of sufficient conditions is given for weak solutions of~\eqref{eq:compressibleEuler} to ensure energy conservation even after vacuum formation. To the best of our knowledge, the only other result on energy conservation for non--$C^2$ nonlinearities is the one on active scalar equations~\cite{AkWi2019}, using however different techniques.

In the current paper we study relaxation of the $C^2$ assumption on the nonlinearities in the context of general conservation laws and their companion quantities. We prove an analogue of Theorem~1.1 in~\cite{GMSG}, using the function space framework of~\cite{BaGwSGTiWi2}. Our main result is the following.

\begin{theorem}\label{thm:main1}
Let $0<\gamma<1$ and let $Q_\alpha$ satisfy~\eqref{eq:growth}. Suppose $u$ is a weak solution to~\eqref{eq:ConsSystem}. Assume that the functions $B_i$ in~\eqref{eq:Compatibility} belong to $W^{1,\infty}(\Ocal;\R)$. Assume further that $G_{i\alpha}\in C^{1,\gamma}(\Ocal,\R)$ and $u\in \underline{B}_{(\gamma+2),VMO}^{1\slash {(\gamma+2)}}(\Omega;\Ocal)$.
Then~\eqref{eq:CL} is satisfied in the sense of distributions.
\end{theorem}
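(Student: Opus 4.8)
The plan is to regularise by convolution and to exploit the compatibility relation~\eqref{eq:Compatibility} at the level of the smooth approximation, thereby reducing the statement to a single commutator estimate. Fix a standard mollifier $\eta^\eps$ and set $u^\eps:=u*\eta^\eps$; since $\Ocal$ is convex and $u$ takes values in $\Ocal$, the average $u^\eps$ again takes values in $\Ocal$, so that $G_{i\alpha}(u^\eps)$, $Q_\alpha(u^\eps)$ and $B_i(u^\eps)$ are all well defined on $\Omega_\eps:=\{x:\dist(x,\partial\Omega)>\eps\}$. Testing the weak formulation of~\eqref{eq:ConsSystem} against $\eta^\eps(x-\cdot)$ shows that $\partial_\alpha (G_{i\alpha}(u))^\eps=0$ pointwise on $\Omega_\eps$. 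For the \emph{smooth} field $u^\eps$ the chain rule together with~\eqref{eq:Compatibility} evaluated at $u^\eps$ gives
\[
  \partial_\alpha Q_\alpha(u^\eps)=B_i(u^\eps)\,\partial_j G_{i\alpha}(u^\eps)\,\partial_\alpha u_j^\eps=B_i(u^\eps)\,\partial_\alpha G_{i\alpha}(u^\eps).
\]
Subtracting the (vanishing) mollified equation, I obtain $\partial_\alpha Q_\alpha(u^\eps)=B_i(u^\eps)\,\partial_\alpha r_{i\alpha}^\eps$ with the commutator $r_{i\alpha}^\eps:=G_{i\alpha}(u^\eps)-(G_{i\alpha}(u))^\eps$. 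Testing against $\phi\in C_c^\infty(\Omega)$ (so that $\supp\phi\subset\Omega_\eps$ for $\eps$ small) and integrating by parts twice yields
\[
  \intO Q_\alpha(u^\eps)\,\partial_\alpha\phi\dx=\intO r_{i\alpha}^\eps\big(\partial_\alpha B_i(u^\eps)\,\phi+B_i(u^\eps)\,\partial_\alpha\phi\big)\dx.
\]
It then remains to pass to the limit $\eps\to0$: the left-hand side should converge to $\intO Q_\alpha(u)\,\partial_\alpha\phi\dx$, while the right-hand side should vanish.

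The convergence of the left-hand side is routine. Writing $p:=\gamma+2$, the embedding of the Besov space into $L^p_{\loc}$ together with Jensen's inequality shows that $|u^\eps|^p\le(|u|^p)*\eta^\eps$ is equi-integrable, so by the growth bound~\eqref{eq:growth} and the Vitali convergence theorem $Q_\alpha(u^\eps)\to Q_\alpha(u)$ in $L^1_{\loc}$. The crux is therefore the commutator term. Here the restriction to $C^{1,\gamma}$ fluxes forces me to replace the classical quadratic Constantin--E--Titi identity by a fractional Taylor expansion: expanding $G_{i\alpha}(u(x-y))$ about $u^\eps(x)$ and using that $\int\eta^\eps(y)\big(u(x-y)-u^\eps(x)\big)\dd y=0$, the first--order term drops out and only the H\"older remainder of $\Grad G_{i\alpha}$ survives, giving the pointwise bound
\[
  |r_{i\alpha}^\eps(x)|\le C\int\eta^\eps(y)\,|u(x-y)-u(x)|^{1+\gamma}\dd y+C\,|u^\eps(x)-u(x)|^{1+\gamma}.
\]

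To control $\intO r_{i\alpha}^\eps\,\partial_\alpha B_i(u^\eps)\,\phi\dx$ I apply H\"older's inequality with the conjugate exponents $p=\gamma+2$ and $p'=(\gamma+2)/(\gamma+1)$, which are matched exactly to the scaling. Since $B_i\in W^{1,\infty}$ one has $|\partial_\alpha B_i(u^\eps)|\lesssim|\Grad u^\eps|$ and the usual estimate gives $\norm{\Grad u^\eps}_{L^p}\lesssim\eps^{-1}\omega_p(u,\eps)$, where $\omega_p(u,\eps):=\sup_{|y|\le\eps}\norm{u(\cdot-y)-u}_{L^p}$; and because $(1+\gamma)p'=p$, the remainder bound above gives $\norm{r_{i\alpha}^\eps}_{L^{p'}}\lesssim\omega_p(u,\eps)^{1+\gamma}$. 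Multiplying, the powers of $\eps$ combine into $\eps^{-1}\omega_p(u,\eps)^{2+\gamma}$, and since $u\in B^{1/(\gamma+2)}_{\gamma+2,\infty}$ only guarantees $\omega_p(u,\eps)\lesssim\eps^{1/(\gamma+2)}$, this quantity is merely \emph{bounded}, not small. This borderline failure is the main obstacle, and it is precisely where the refined space enters: the defining vanishing property of $\underline{B}_{(\gamma+2),VMO}^{1/(\gamma+2)}$ forces the critical quantity $\eps^{-1}\omega_p(u,\eps)^{2+\gamma}$ (more precisely, the associated averaged Besov seminorm) to tend to zero, upgrading boundedness to convergence to zero. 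The remaining term $\intO r_{i\alpha}^\eps\,B_i(u^\eps)\,\partial_\alpha\phi\dx$ is controlled directly by $\norm{B_i}_\infty\norm{\partial_\alpha\phi}_\infty\norm{r_{i\alpha}^\eps}_{L^1(\supp\phi)}\to0$, since $\norm{r_{i\alpha}^\eps}_{L^{p'}}\to0$. Letting $\eps\to0$ in the displayed identity then yields $\intO Q_\alpha(u)\,\partial_\alpha\phi\dx=0$ for every $\phi$, which is~\eqref{eq:CL} in the sense of distributions.
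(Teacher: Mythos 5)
Your proposal is correct and follows essentially the same route as the paper: mollify, invoke the compatibility condition at the level of $u^\eps$, bound the commutator $G_{i\alpha}(u^\eps)-(G_{i\alpha}(u))*\eta^\eps$ by a fractional Taylor expansion yielding the $|u(x-y)-u(x)|^{1+\gamma}$ remainder, and close the estimate via H\"older with exponents $\gamma+2$ and $(\gamma+2)/(\gamma+1)$, with the $VMO$-type condition upgrading the borderline bounded term to one vanishing as $\eps\to0$. The only cosmetic difference is that you expand about $u^\eps(x)$ rather than $u(x)$ and phrase the estimates via the modulus $\omega_p(u,\eps)$ instead of the averaged seminorm of Lemmas~\ref{Lemma:vmogradient} and~\ref{Lemma:Commutator}, which you correctly flag as the quantity actually controlled.
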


In Section~\ref{sec:prelim} we define the relevant function spaces, and then in Section~\ref{sec:result} give a proof of the above theorem.
In Section~\ref{sec:examples} we look at two examples for possible application of Theorem~\ref{thm:main1}: 
in Subsection~\ref{ssec:elasticity} we apply our main result to the equations of nonlinear elasticity and in Subsection~\ref{ssec:Euler} we discuss the scope and limitations of the current study in the context of isentropic compressible Euler equations. 

\section{Preliminaries and notation}

\label{sec:prelim}
Let $p\geq1$ and $s\in(0,1)$.
We denote by $B_{p,\infty}^s(\Omega;\Ocal)$ the space of those functions $u\in L^p$, whose translations can be controlled as follows
\begin{equation}
    \sup\limits_{x'\in\Omega}|x'|^{-s}\norm{u(\cdot)-u(\cdot-x')}_p < \infty,
\end{equation}
so that $B_{p,\infty}^s(\Omega;\Ocal)$ can be equipped with the norm
\begin{equation}
    \norm{u}_{B_{p,\infty}^s(\Omega;\Ocal)}\coloneqq \norm{u}_p + \sup\limits_{x'\in\Omega}|x'|^{-s}\norm{u(\cdot)-u(\cdot-x')}_p.
\end{equation}
One then clearly has
\begin{equation}\label{eq:besovshift}
\norm{u(\cdot)-u(\cdot-x')}_p \leq |x'|^s\ \norm{u}_{B_{p,\infty}^s(\Omega;\Ocal)}.
\end{equation}
Let $\eta\in C_c^\infty(\R^{d+1})$ be a non--negative symmetric function of unit mass with $\supp\eta\subset \overline{B_1(0)}$ and set $\eta^\eps(x) = \eps^{-(d+1)}\eta(\frac{x}{\eps})$. Then one easily shows the following estimates, cf.~\cite[Lemma~2.1]{DGST}
\begin{equation}\label{eq:besovapprox}
\norm{\ueps-u}_p \leq C\eps^s\norm{u}_{B_{p,\infty}^s(\Omega;\Ocal)}
\end{equation}
and
\begin{equation}\label{besovgradient}
\norm{\nabla(\ueps)}_p \leq C\eps^{s-1}\norm{u}_{B_{p,\infty}^s(\Omega;\Ocal)}.
\end{equation}

\medskip

The Besov space functional setting is usual in the context of sufficient conditions for energy conservation. Already in~\cite{CoETi94} the Onsager conjecture is shown in the Besov space $B_{3,\infty}^s$ rather than $C^s$, $s>1\slash3$. Slightly weaker assumptions have then been found in~\cite{Chetal2008}, where the space $B_{3,c_0}^{1\slash3}(\R^3)$ is considered. We say that $u$ belongs to $B_{p,c_o}^s(\Omega;\Ocal)$ if
\begin{equation}
    \lim\limits_{|y|\to0}\;\int_{\Omega}\frac{|u(x+y)-u(x)|^p}{|y|^{ps}}\dx = 0.
\end{equation}

Fjordholm and Wiedemann~\cite{FjWi} use instead the even weaker integral condition
\begin{equation}\label{FjWicondition}
    \liminf\limits_{\eps\to0}\frac1\eps\int_0^T\int_{\T^d}\dashint_{B_{\eps}(x)}|u(x,t)-u(y,t)|^3\ dydxdt = 0.
\end{equation}
to prove local energy conservation for the incompressible Euler, see also~\cite{DuRo2000, Shvydkoy2009}.
Inspired by~\eqref{FjWicondition}, Bardos et al.\ \cite{BaGwSGTiWi2} introduce the Besov-$VMO$ type space $\underline{B}_{3,VMO}^{1\slash3}$. We use here this construction with suitable modifications. We will say that $u$ belongs to $\underline{B}_{p,VMO}^{1\slash p}(\Omega;\Ocal)$ if
\begin{equation}\label{Besov-VMOcondition}
    \frac1\eps\int_{\Omega'}\dashint_{B_\eps(x)}|u(x)-u(y)|^p\ dy\dx \leq \omega_{\Omega'}(\eps),
\end{equation}
where $\Omega'\subset\subset\Omega$ and $\omega_{\Omega'}$ is positive and satisfies $\liminf\limits_{\eps\to0}\omega_{\Omega'}(\eps)=0$.
For $s>1\slash p$ we then have the chain of inclusions
\begin{equation}
    C^s \subset B_{p,\infty}^s \subset B_{p,c_0}^{1\slash p} \subset \underline{B}_{p,VMO}^{1\slash p} \subset B_{p,\infty}^{1\slash p}.
\end{equation}
Thus our results, proven for solutions in the Besov-$VMO$ spaces will {\em a fortiori} work also for solutions in classical H\"older and Besov spaces.

Similarly to property~\eqref{besovgradient} we have
\begin{lemma}\label{Lemma:vmogradient}
Let $1<p<\infty$ and $u\in\underline{B}_{p,VMO}^{1\slash p}(\Omega;\Ocal)$. Then
\begin{equation}
    \norm{\partial_\alpha(u*\eta^\eps)}_{L^p(\Omega')} \leq C(\omega_{\Omega'}(\eps))^{1\slash p}\eps^{-1\slash p'}
\end{equation}
where $\Omega'\subset\subset\Omega$ and $1\slash p + 1\slash p' = 1$.
\end{lemma}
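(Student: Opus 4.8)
The plan is to exploit the vanishing of $\int\partial_\alpha\eta^\eps\dd y$ in order to convert the mollified gradient into an integral of the increment $u(y)-u(x)$, which is precisely the quantity controlled by the Besov--$VMO$ condition~\eqref{Besov-VMOcondition}. Throughout I would fix $\Omega'\subset\subset\Omega$ and restrict to $\eps<\dist(\Omega',\partial\Omega)$, so that for every $x\in\Omega'$ the ball $B_\eps(x)$ lies in $\Omega$ and the convolution is well defined there.

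First I would differentiate under the integral sign and use that $\eta^\eps$ has unit mass, whence $\int_{\R^{d+1}}\partial_\alpha\eta^\eps(x-y)\dd y=0$. This permits subtracting the constant-in-$y$ value $u(x)$ and writing
\[
\partial_\alpha(u*\eta^\eps)(x) = \int_{B_\eps(x)}\big(u(y)-u(x)\big)\,\partial_\alpha\eta^\eps(x-y)\dd y,
\]
the domain being cut down to $B_\eps(x)$ by the support condition $\supp\eta\subset\overline{B_1(0)}$. This cancellation is the crucial point: without it one only recovers the trivial $\eps^{-1}$ bound, whereas here the small factor $u(y)-u(x)$ supplies the gain encoded in $\omega_{\Omega'}$.

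Next, by H\"older's inequality in $y$ with exponents $p$ and $p'$,
\[
|\partial_\alpha(u*\eta^\eps)(x)| \leq \Big(\int_{B_\eps(x)}|u(y)-u(x)|^p\dd y\Big)^{1\slash p}\Big(\int_{B_\eps(x)}|\partial_\alpha\eta^\eps(x-y)|^{p'}\dd y\Big)^{1\slash p'}.
\]
The kernel factor I would estimate by the scaling $|\partial_\alpha\eta^\eps|\leq\norm{\nabla\eta}_\infty\,\eps^{-(d+2)}$ together with $|B_\eps|\leq C\eps^{d+1}$, so that the second factor is $\leq C\eps^{(d+1)\slash p'-(d+2)}$. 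I would then raise this pointwise inequality to the $p$-th power, integrate over $\Omega'$, and apply Fubini to recognise the double integral $\int_{\Omega'}\int_{B_\eps(x)}|u(y)-u(x)|^p\dd y\dd x$. Rewriting the inner integral as $|B_\eps|$ times the average $\dashint_{B_\eps(x)}$ and invoking~\eqref{Besov-VMOcondition} bounds this quantity by $C\eps^{d+2}\omega_{\Omega'}(\eps)$.

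The only real work is the bookkeeping of the powers of $\eps$. Collecting exponents produces $\eps^{p[(d+1)\slash p'-(d+2)]+(d+2)}$, and using the identity $p\slash p'=p-1$ this simplifies to $\eps^{-(p-1)}$; taking $p$-th roots turns the estimate into $\norm{\partial_\alpha(u*\eta^\eps)}_{L^p(\Omega')}\leq C(\omega_{\Omega'}(\eps))^{1\slash p}\eps^{-(p-1)\slash p}$, and since $(p-1)\slash p=1\slash p'$ this is exactly the asserted bound. I do not expect a genuine obstacle here: the argument is a clean scaling-plus-H\"older computation, the single point requiring care being the cancellation step that replaces $u(y)$ by the increment $u(y)-u(x)$.
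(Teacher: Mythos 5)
Your proof is correct and follows essentially the same route as the paper: both hinge on the cancellation $\int\partial_\alpha\eta^\eps=0$ to replace $u(y)$ by the increment $u(y)-u(x)$, and then trade the kernel's $\eps^{-(d+2)}$ scaling against the Besov--$VMO$ modulus. The only cosmetic difference is that you apply H\"older with the sup-bound on $\partial_\alpha\eta^\eps$, whereas the paper uses Jensen's inequality with the (normalized) kernel as a weight; the power counting is identical and both yield $C(\omega_{\Omega'}(\eps))^{1/p}\eps^{-1/p'}$.
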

\begin{proof}
We use Jensen's inequality and properties of the standard mollifier to get
\begin{equation}
    \begin{split}
        \norm{\partial_\alpha(u*\eta^\eps)}_{L^3(\Omega')}^p &=\int_{\Omega'}\left|\int_{\R^{d+1}}u(y)\frac{\partial}{\partial x_\alpha}\eta^\eps(x-y)\ dy \right|^p\dx\\
        &=\int_{\Omega'}\left|\int_{\R^{d+1}}(u(x-y)-u(x))\frac{\partial}{\partial y_\alpha}\eta^\eps(y)\ dy \right|^p\dx\\
        &\leq C(\eta)\eps^{1-p}\int_{\Omega'}\int_{\R^{d+1}}|u(x-y)-u(x)|^p\frac{\partial}{\partial y_\alpha}\eta^\eps(y)\ dy\dx \\
        &\leq C\eps^{1-p}\eps^{-1}\int_{\Omega'}\dashint_{B_{\eps}(0)}|u(x-y)-u(x)|^p\ dy\dx\\
        &\leq C\eps^{1-p}\omega_{\Omega'}(\eps).
    \end{split}
\end{equation}

\end{proof}

\section{Proof of the main result}\label{sec:result}
In this section we prove the main result of this article, Theorem~\ref{thm:main1}. The proof relies upon mollification of equation~\eqref{eq:ConsSystem}, and then estimation of the resulting commutators $\flux(\ueps)-\flux(u)*\eta^\eps$.
The observation of Feireisl et al.\ in~\cite{FGSW} was that if a nonlinear function $G$ is twice continuously differentiable with respect to each dependent variable, then, by means of a Taylor expansion, one can treat the commutator $G(\ueps)-G(u)*\eta^\eps$ as a bilinear term, obtaining a bound $|G(\ueps)-G(u)*\eta^\eps|\lesssim |u^\eps-u|^2$. It was then observed by Akramov et al. in \cite{ADSW} that a similar estimate holds for $G$ in $C^{1,\gamma}$. We follow this approach here with the following lemma.

\begin{lemma}\label{Lemma:Commutator}
Let $1\leq q<\infty$. Suppose $G:\Ocal\to\R$ is continuously differentiable with $\gamma$-H\"older continuous partial derivatives, and let $u\in \underline{B}_{q(\gamma+1),VMO}^{1\slash q(\gamma+1)}(\Omega;\Ocal)$. Then
\begin{equation}
    \norm{G(\ueps)-G(u)*\eta^\eps}_{L^q(\Omega')} \leq C(\eps \omega_{\Omega'}(\eps))^{1\slash q},
\end{equation}
where $\Omega'\subset\subset\Omega$ and $\eps>0$ is small enough.
\end{lemma}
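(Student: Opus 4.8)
The plan is to estimate the commutator pointwise and then integrate. Write $G(u^\eps(x)) - (G(u)*\eta^\eps)(x) = \int_{\R^{d+1}} \bigl[G(u^\eps(x)) - G(u(x-y))\bigr]\eta^\eps(y)\,dy$, using that $\eta^\eps$ has unit mass. The key is to exploit the $C^{1,\gamma}$ regularity of $G$ through a first-order Taylor expansion centred at $u^\eps(x)$: for each fixed $y$,
\begin{equation}
G(u^\eps(x)) - G(u(x-y)) = \nabla G(u^\eps(x))\cdot\bigl(u^\eps(x) - u(x-y)\bigr) + R(x,y),
\end{equation}
where the $\gamma$-H\"older continuity of $\nabla G$ gives the remainder bound $|R(x,y)| \le C\,[\nabla G]_{C^\gamma}\,|u^\eps(x) - u(x-y)|^{1+\gamma}$.

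The linear term is designed to vanish under integration against $\eta^\eps$. Indeed,
\begin{equation}
\int_{\R^{d+1}} \nabla G(u^\eps(x))\cdot\bigl(u^\eps(x) - u(x-y)\bigr)\eta^\eps(y)\,dy = \nabla G(u^\eps(x))\cdot\Bigl(u^\eps(x) - \int u(x-y)\eta^\eps(y)\,dy\Bigr) = 0,
\end{equation}
since the inner integral is exactly $u^\eps(x)$ by symmetry of $\eta$ and definition of the mollification. Hence only the H\"older remainder survives, and pointwise
\begin{equation}
\bigl|G(u^\eps(x)) - (G(u)*\eta^\eps)(x)\bigr| \le C\int_{\R^{d+1}} |u^\eps(x) - u(x-y)|^{1+\gamma}\eta^\eps(y)\,dy.
\end{equation}

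Next I would raise this to the $q$-th power and integrate over $\Omega'$. Applying Jensen's inequality in the variable $y$ with respect to the probability measure $\eta^\eps(y)\,dy$ moves the power inside:
\begin{equation}
\bigl|G(u^\eps(x)) - (G(u)*\eta^\eps)(x)\bigr|^q \le C\int_{\R^{d+1}} |u^\eps(x) - u(x-y)|^{q(1+\gamma)}\eta^\eps(y)\,dy.
\end{equation}
Integrating in $x$ and bounding $\eta^\eps \le C\eps^{-(d+1)}$ on its support $B_\eps(0)$ turns the $y$-integral into an average over $B_\eps$, so that the right side is controlled by $C\,\eps\cdot\frac{1}{\eps}\int_{\Omega'}\dashint_{B_\eps(x)}|u(x) - u(z)|^{q(\gamma+1)}\,dz\,dx$. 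Here the Besov-$VMO$ hypothesis $u\in\underline{B}_{q(\gamma+1),VMO}^{1/q(\gamma+1)}$ enters precisely through condition~\eqref{Besov-VMOcondition} with $p = q(\gamma+1)$, yielding the bound $C\,\eps\,\omega_{\Omega'}(\eps)$. Taking $q$-th roots gives the claimed estimate.

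I anticipate the main technical obstacle to be bookkeeping the difference between $u^\eps(x)$ and the pointwise values $u(x-y)$ inside the H\"older remainder: one must be careful that the exponent $1+\gamma$ matches the summability index of the Besov-$VMO$ space, and that the shifted translate $u(x-y)$ appearing in the mollification is correctly rewritten as the inner average over $B_\eps(x)$ to invoke~\eqref{Besov-VMOcondition}. A minor subtlety is ensuring $u^\eps(x)$ and $u(x-y)$ remain in the convex open set $\Ocal$ so the Taylor expansion is valid along the connecting segment; convexity of $\Ocal$ together with $\eps$ small enough (so mollification stays within $\Omega'$'s neighbourhood) secures this.
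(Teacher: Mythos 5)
Your argument is correct and reaches the same estimate, but it organizes the Taylor expansion differently from the paper. You expand $G$ around the single base point $u^\eps(x)$ inside the integral $\int\bigl[G(u^\eps(x))-G(u(x-y))\bigr]\eta^\eps(y)\,dy$, so the linear term dies outright because $\int\bigl(u^\eps(x)-u(x-y)\bigr)\eta^\eps(y)\,dy=0$ (this is just the definition of mollification; the symmetry of $\eta$ you invoke is not actually needed). The paper instead expands twice around $u(x)$ --- once for $G(u^\eps)-G(u)$ and once, under the convolution, for $G(u(x'))-G(u(x))$ --- and the two copies of the identical linear term $\nabla G(u(x))\cdot(u^\eps-u)$ cancel when the two inequalities are subtracted. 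Your version is arguably tidier, at the price of a remainder involving $|u^\eps(x)-u(x-y)|^{1+\gamma}$ rather than pure increments $|u(x)-u(x-y)|^{1+\gamma}$ of $u$; the paper's remainders come out directly in terms of $|u^\eps-u|$ and $|u(x)-u(x-x')|$, both of which convert to increments by a single Jensen step. You flag this conversion as ``bookkeeping'' but do not carry it out: to land on the quantity appearing in~\eqref{Besov-VMOcondition} you still need
\begin{equation}
|u^\eps(x)-u(x-y)|^{q(1+\gamma)}\le C\,|u^\eps(x)-u(x)|^{q(1+\gamma)}+C\,|u(x)-u(x-y)|^{q(1+\gamma)},
\end{equation}
followed by one more application of Jensen's inequality to bound $|u^\eps(x)-u(x)|^{q(1+\gamma)}\le\int|u(x-z)-u(x)|^{q(1+\gamma)}\eta^\eps(z)\,dz$. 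With that line inserted, both terms reduce to $\dashint_{B_\eps(x)}|u(x)-u(z)|^{q(1+\gamma)}\,dz$ and the Besov-$VMO$ hypothesis with $p=q(\gamma+1)$ closes the proof exactly as you describe. This is a routine fix, not a flaw in the approach.
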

\begin{proof}
For any $y,y_0\in\Ocal$ we parameterize the line segment $[y_0,y]$ by $l(t)=y_0+t(y-y_0)$. We then have
\begin{equation}
\begin{aligned}
    G(y) - G(y_0) &= \int_{[y_0,y]}\nabla G(z)\cdot dz = \int_0^1\nabla G(l(t))\cdot(y-y_0)\ dt\\
    &=\int_0^1\left(\nabla G(l(t)) - \nabla G(y_0)\right)\cdot(y-y_0)\ dt + \nabla G(y_0)\cdot(y-y_0).
\end{aligned}
\end{equation}
Therefore
\begin{equation}
\begin{aligned}
    |G(y)-G(y_0)-\nabla G(y_0)\cdot(y-y_0)|&\leq \int_0^1|\left(\nabla G(l(t)) - \nabla G(y_0)\right)\cdot(y-y_0)|\ dt\\
    &\leq C\int_0^1|l(t)-y_0|^\gamma|y-y_0|\ dt\\
    &\leq C|y-y_0|^{\gamma+1}
\end{aligned}
\end{equation}
with the constant $C$ independent of the choice of $y_0,y$. We deduce that 
\begin{equation}\label{eq:T1}
    |G(\ueps)-G(u)-\nabla G(u)\cdot(\ueps-u)|\leq C|\ueps-u|^{\gamma+1}
\end{equation}
and
\begin{equation}
    |G(u(x'))-G(u(x))-\nabla G(u(x))\cdot(u(x')-u(x))|\leq C|u(x')-u(x)|^{\gamma+1}.
\end{equation}
Multiplying the last inequality by $\eta^\eps(x-x')$, integrating in $x'\in\Ocal$ and applying Jensen's inequality to the left-hand side yields
\begin{equation}\label{eq:T2}
    |G(u)*\eta^\eps-G(u)-\nabla G(u)\cdot(\ueps-u)|\leq C\intO |u(x)-u(x-x')|^{\gamma+1}\eta^\eps(x')\ dx'
\end{equation}
Combining inequalities~\eqref{eq:T1} and~\eqref{eq:T2} and using Jensen's inequality we get
\begin{equation}\label{eq:FluxCommutator}
    |G(u)*\eta^\eps-G(\ueps)|\leq C\intO |u(x)-u(x-x')|^{\gamma+1}\eta^\eps(x')\ dx'.
\end{equation}
Therefore
\begin{equation}
\begin{split}
    \norm{G(\ueps)-G(u)*\eta^\eps}_{L^q(\Omega')}^q &\leq C\int_{\Omega'}\left|\intO |u(x)-u(x-x')|^{\gamma+1}\eta^\eps(x')\ dx' \right|^q \dx\\
    &\leq C\int_{\Omega'}\intO |u(x)-u(x-x')|^{(\gamma+1)q}\eta^\eps(x')\ dx' \dx\\
    &\leq C\int_{\Omega'}\dashint_{B_\eps(x)} |u(x)-u(x')|^{(\gamma+1)q}\ dx' \dx\\
    &\leq C\eps\omega_{\Omega'}(\eps).
\end{split}
\end{equation}
\end{proof}

\noindent Armed with Lemma~\ref{Lemma:Commutator} we prove the main theorem.

\begin{proof}[Proof of Theorem~\ref{thm:main1}]
Taking the convolution of equation~\eqref{eq:ConsSystem} with the mollifier $\eta^\eps$ as above we get
\begin{equation}\label{eq:smoothentropy}
    \pa\flux(\ueps) = \pa\left(\flux(\ueps) - \flux(u)*\eta^\eps\right).
\end{equation}
Multiplying with $B_i(\ueps)$ and summing over indicies $i=1,\dots,n$ the last equality becomes
\begin{equation}
    \pa Q_\alpha(\ueps) = B_i(\ueps)\pa\left(\flux(\ueps) - \flux(u)*\eta^\eps\right)
\end{equation}
where the left hand side comes from the compatibility condition~\eqref{eq:Compatibility}.

We now wish to pass to the limit, in the sense of distributions, as $\eps\to 0$.
The growth condition on $Q_\alpha$ implies that the family $(Q_\alpha(\ueps))_{\eps>0}$ is uniformly integrable and tight. Therefore by Vitali's convergence theorem, we have the convergence of integrals
\begin{equation}
    \intO\pa Q_\alpha(\ueps)\phi\ dx\to\intO\pa Q_\alpha(u)\phi\ dx
\end{equation}
for any test function $\phi\in\Dcal(\Omega)$. Therefore left-hand side of~\eqref{eq:smoothentropy} converges in $\Dcal'(\Omega)$ to $\pa Q_\alpha(u)$.

We will now discuss the convergence
\begin{equation}
     B_i(\ueps)\pa\left(\flux(\ueps) - \flux(u)*\eta^\eps\right)\to 0
\end{equation}
in $\Dcal'(\Omega)$. This is the main technical step of the proof. 

\medskip

Choose a test function $\phi\in\Dcal(\Omega)$ supported in $\Omega'\subset\subset\Omega$ and consider
\begin{equation}
    R_\eps = \intO\phi(x)B_i(\ueps)\pa\left(\flux(\ueps) - \flux(u)*\eta^\eps\right)\ \dx.
\end{equation}
Integration by parts yields
\begin{equation}
    \begin{aligned}
    R_\eps = -\intO\pa\phi B_i(\ueps)&\left(\flux(\ueps) - \flux(u)*\eta^\eps\right)\ \dx\\
    &- \intO\phi\pa(B_i(\ueps))\left(\flux(\ueps) - \flux(u)*\eta^\eps\right)\ \dx
    \end{aligned}
\end{equation}
and thus
\begin{equation}
    \begin{aligned}
    |R_\eps| \leq \norm{\pa\phi}_\infty\int_{\Omega'} &|B_i(\ueps)||\flux(\ueps) - \flux(u)*\eta^\eps|\ \dx\\
    &+ \norm{\phi}_\infty\int_{\Omega'}|\nabla B_i(\ueps)\cdot\pa(\ueps)||\flux(\ueps) - \flux(u)*\eta^\eps|\ \dx.
    \end{aligned}
\end{equation}
We conclude the estimation of the commutator $R_\eps$ using Lemmata~\ref{Lemma:vmogradient} and~\ref{Lemma:Commutator}.
Firstly, we have
\begin{equation}
\intO |B_i(\ueps)||\flux(\ueps) - \flux(u)*\eta^\eps|\ \dx \leq C\norm{B}_{L^\infty(\Omega)}(\omega_{\Omega'}(\eps)\eps)^{\frac{\gamma+1}{\gamma+2}},
\end{equation}
which converges to zero as $\eps\to 0$.
Secondly,
\begin{equation}
\begin{aligned}
\intO|\nabla& B_i(\ueps)\cdot\pa(\ueps)||\flux(\ueps) - \flux(u)*\eta^\eps|\ \dx\\
&\leq\norm{\nabla B_i}_{L^\infty(\Omega)}\norm{\pa(\ueps)}_{L^{\gamma+2}(\Omega')}\norm{\flux(\ueps) - \flux(u)*\eta^\eps}_{L^{\frac{\gamma+2}{\gamma+1}}(\Omega')}\\
&\leq C\norm{B}_{W^{1,\infty}}(\omega_{\Omega'}(\eps))^{\frac{1}{\gamma+2}}\eps^{-\frac{\gamma+1}{\gamma+2}}(\omega_{\Omega'}(\eps)\eps)^{\frac{\gamma+1}{\gamma+2}}\\
&\leq C\omega_{\Omega'}(\eps)
\end{aligned}
\end{equation}
which converges to zero as $\eps\to 0$.
\end{proof}

\begin{remark}
\begin{enumerate}
    \item As mentioned before, Theorem~\ref{thm:main1} implies {\em a fortiori} the desired conclusion for solutions $u$ in the space $B_{\gamma+2,\infty}^s(\Omega;\Ocal)$ with $s>1\slash{(\gamma+2)}$.
    \item The theorem provides a sufficient condition for the conservation of a companion quantity. Since shock solutions belong to $B_{p,\infty}^{1\slash p}$ for any $1\leq p <\infty$ and we have $\underline{B}_{p,VMO}^{1\slash p}\subset B_{p,\infty}^{1\slash p}$, there is little room for improvement in terms of optimality of this condition.
    \item Given additional information on the problem~\eqref{eq:ConsSystem}, the assumption on the solution $u$ may be partially relaxed, cf.\ \cite[Remark~2.4]{BaGwSGTiWi2}.
\end{enumerate}
\end{remark}

\section{Examples}\label{sec:examples}

\subsection{Nonlinear elasticity}\label{ssec:elasticity}

Consider a continuous medium in three dimensions with a nonlinear elastic response, described by
\begin{equation}\label{eq:nonlinearelasticity}
\partial^2_{t}y_i = \partial_\alpha T_{i\alpha}(\nabla y)
\end{equation}
where $y:(0,T)\times\T^3\to\R^3$ is a motion and $T = (T_{i\alpha})$, $i,\alpha = 1,2,3$, is the Piola--Kirchoff stress tensor. We shall assume that the material is hyperelastic, meaning that $T$ arises as the gradient of a stored energy function $W:\R^{3\times 3}\to[0,\infty)$
\[
T_{i\alpha}(F) = \frac{\partial W(F)}{\partial F_{i\alpha}}.
\]
Writing $v_i = \partial_ty_i$ for velocity components and $F_{i\alpha} = \partial_\alpha y_i$ for the deformation gradient, the system~\eqref{eq:nonlinearelasticity} can be written as a system of $12$ conservation laws
\begin{equation}\label{eq:elasticconservlaw}
    \begin{aligned}
    \partial_tv_i &= \partial_\alpha T_{i\alpha}(F) = \partial_\alpha\left(\frac{\partial W(F)}{\partial F_{i\alpha}} \right),\;\;\;i=1,2,3,\\
    \partial_tF_{i\alpha} &= \partial_\alpha v_i = \partial_{\beta}(v_i\delta_{\alpha\beta}),\;\;\;i,\alpha=1,2,3.
    \end{aligned}
\end{equation}
One requires of solutions to~\eqref{eq:elasticconservlaw} that $F=\nabla y$, so that~\eqref{eq:elasticconservlaw} is equivalent to~\eqref{eq:nonlinearelasticity}. Equivalently
\[
\partial_\beta F_{i\alpha} = \partial_\alpha F_{i\beta},
\]
and in fact it is enough to require this condition only at time $t=0$, since it is then transported by the equation.
For the system~\eqref{eq:elasticconservlaw} we have the following companion law, representing conservation of total mechanical energy,
\begin{equation}\label{eq:entropyconservaton}
    \partial_t\left(\frac{1}{2}|v|^2 + W(F)\right) + \partial_\alpha\left(-v_i\frac{\partial W(F)}{\partial F_{i\alpha}}\right) = 0.
\end{equation}
To obtain this relation we multiply the equations by $v_i$ and $\frac{\partial W(F)}{\partial F_{i\alpha}}$, respectively, and sum over corresponding indices.
Notice that the fluxes in~\eqref{eq:elasticconservlaw} are smooth w.r.t.\ the dependent variables $v$ and $F$, apart from the term involving $\frac{\partial W(F)}{\partial F_{i\alpha}}$, whose regularity depends on the regularity of the stored energy $W$. To apply Theorem~\ref{thm:main1}, we require that $W$ be of class $C^{2,\gamma}$, so that $\frac{\partial W(F)}{\partial F_{i\alpha}}\in C^{1,\gamma}$. Importantly, then the multiplier $\frac{\partial W(F)}{\partial F_{i\alpha}}$ is in $W^{1,\infty}$, so all the requirements of the theorem are met. We thus have
\begin{theorem}\label{thm:elasticity}
Let $0<\gamma<1$. Suppose $W\in C^{2,\gamma}(\R^{3\times3})$. Let $(v,F)$ be a distributional solution to~\eqref{eq:elasticconservlaw} with
\begin{equation}
    v_{i}, F_{i\alpha} \in \underline{B}_{\gamma+2,VMO}^{1\slash(\gamma+2)}((0,T)\times\T^3).
\end{equation}
Then the companion law~\eqref{eq:entropyconservaton} is satisfied in the sense of distributions on $(0,T)\times\T^3$.
\end{theorem}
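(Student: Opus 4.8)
The plan is to verify that the system of 12 conservation laws in~\eqref{eq:elasticconservlaw} fits exactly into the abstract framework of Theorem~\ref{thm:main1}, so that the conclusion follows by direct application. The dependent variable is $u = (v_i, F_{i\alpha})$ taking values in $\Ocal = \R^3 \times \R^{3\times 3} = \R^{12}$, with the space-time variable $x = (t, x_1, x_2, x_3) \in (0,T)\times\T^3$, so $d = 3$ and $n = 12$. First I would write out the flux components $G_{i\alpha}(u)$ explicitly: for the velocity equations the $t$-flux is $v_i$ and the spatial flux is $-\partial W/\partial F_{i\alpha}$, while for the deformation-gradient equations the $t$-flux is $F_{i\alpha}$ and the spatial flux is $-v_i\delta_{\alpha\beta}$. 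I would then identify the companion fluxes $Q_\alpha$ from~\eqref{eq:entropyconservaton}: $Q_0 = \tfrac{1}{2}|v|^2 + W(F)$ and $Q_\beta = -v_i\,\partial W/\partial F_{i\beta}$.

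\textbf{The key verification} is that the compatibility conditions~\eqref{eq:Compatibility} hold with an explicit multiplier $B$. Here the natural choice is $B = (v_i, \partial W(F)/\partial F_{i\alpha})$, matching the factors one multiplies the two groups of equations by to derive the energy law. I would check~\eqref{eq:Compatibility} componentwise: differentiating $Q_0$ and $Q_\beta$ with respect to each dependent variable $v_j$ and $F_{j\beta}$ and confirming these equal $B_i\,\partial_j G_{i\alpha}$. This is the routine (but essential) chain-rule bookkeeping that encodes the formal energy identity; it uses the symmetry $\partial^2 W/\partial F_{i\alpha}\partial F_{j\beta}$ being symmetric in the pairs $(i,\alpha)$ and $(j,\beta)$, which holds because $W$ is $C^{2,\gamma}$.

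\textbf{The regularity hypotheses} then need to be matched. Since $W \in C^{2,\gamma}$, the stress $\partial W/\partial F_{i\alpha}$ lies in $C^{1,\gamma}$, and all other flux components (namely $v_i$, $F_{i\alpha}$, $v_i\delta_{\alpha\beta}$) are linear, hence smooth; thus $G_{i\alpha} \in C^{1,\gamma}(\Ocal)$ as required. For the multiplier, $B_i = v_i$ is globally Lipschitz and $B_{i\alpha} = \partial W/\partial F_{i\alpha}$ is $C^{1,\gamma}$, \emph{however} to conclude $B \in W^{1,\infty}(\Ocal;\R)$ I need its gradient bounded, i.e.\ the second derivatives $\partial^2 W/\partial F_{i\alpha}\partial F_{j\beta}$ bounded on $\R^{3\times 3}$; this is precisely what the paper asserts when it says the multiplier is in $W^{1,\infty}$, and I would note that $C^{2,\gamma}$ with the standing convention gives this. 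Finally I must verify the growth condition~\eqref{eq:growth}: from $Q_0 = \tfrac12|v|^2 + W(F)$ and $Q_\beta = -v_i\,\partial W/\partial F_{i\beta}$, the quadratic term $|v|^2$ and the bilinear term $v\cdot(\partial W/\partial F)$ are controlled by $|u|^2 \le C(1+|u|^{2+\gamma})$, and $W(F)$ grows at most like $|F|^2$ (again from bounded second derivatives), so~\eqref{eq:growth} holds.

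\textbf{The main obstacle} is not any single hard estimate but rather ensuring the global boundedness of the second derivatives of $W$ is genuinely available: $C^{2,\gamma}(\R^{3\times3})$ must be read to include uniform bounds on $\nabla^2 W$ for the multiplier $\partial W/\partial F \in W^{1,\infty}$ and for the growth bound on $W$, since otherwise the quadratic treatment breaks down. With these points settled, the hypotheses of Theorem~\ref{thm:main1} are all met for $u = (v, F) \in \underline{B}_{\gamma+2,VMO}^{1/(\gamma+2)}$, and invoking it directly yields~\eqref{eq:entropyconservaton} in the sense of distributions, completing the proof.
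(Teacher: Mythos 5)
Your proposal is correct and follows essentially the same route as the paper: the paper likewise proves Theorem~\ref{thm:elasticity} by identifying the fluxes and the multiplier $B=(v_i,\partial W/\partial F_{i\alpha})$, observing that $W\in C^{2,\gamma}$ gives $\partial W/\partial F_{i\alpha}\in C^{1,\gamma}\cap W^{1,\infty}$, and invoking Theorem~\ref{thm:main1}. Your explicit componentwise check of the compatibility conditions~\eqref{eq:Compatibility} and of the growth condition~\eqref{eq:growth}, and your remark that $C^{2,\gamma}(\R^{3\times3})$ must be read as carrying uniform bounds on $\nabla^2 W$, are more detailed than the paper's brief verification but do not change the argument.
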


\begin{remark}
In the above discussion we ignore the question of convexity of the domain of $W$. In fact, to guarantee that the deformation is orientation--preserving one requires that $\det F>0$, so that $W$ should really be defined only on the non-convex set $\R^{3\times3}_+$ rather than $\R^{3\times 3}$, see~\cite{Ball1977} for further discussion and references. On the other hand, in the proof of Lemma~\ref{Lemma:Commutator} we rely on the convexity of the set $\Ocal$. However, the issue of non-convexity of the domain of the flux has been discussed in~\cite[Theorem~1.2]{GMSG}, and the same argument carries over to the current framework. We therefore skip the details here.     
\end{remark}

\subsection{Compressible Euler}\label{ssec:Euler}
We now wish to discuss the extent of possible application of our main result to a compressible inviscid fluid. To this end let us consider the isentropic Euler system
\begin{equation}\label{eq:compressEuler}
    \begin{aligned}
    \partial_t\rho + \partial_j(\rho v_j) &= 0,\\
    \partial_t(\rho v_i) + \partial_j(\rho v_iv_j + p(\rho)\delta_{ij}) &= 0,\;\;\; i=1,\dots, d
    \end{aligned}
\end{equation}
in $(0,T)\times\T^d$, where $\rho:(0,T)\times\T^d\to\R^d$ is the density and $v=(v_1,\dots,v_d):(0,T)\times\T^d\to\R^d$ is the velocity of the fluid. The pressure $p=p(\rho)$ is a given function of the density.

\smallskip

\noindent In~\cite{FGSW} it is shown that if $v$ belongs to $B^{s_1}_{3,\infty}((0,T)\times\T^d)$, while $\rho$ and $\rho v$ are in $B^{s_2}_{3,\infty}((0,T)\times\T^d)$ with $\max{(2s_1+s_2, s_1+2s_2)}>1$, and if $p\in C^2([\underline{\rho},\overline{\rho}])$, where $0<\underline{\rho}\leq\rho\leq\overline{\rho}$, then the energy is locally conserved in $(0,T)\times\T^d$.

\smallskip

\noindent In the context of~\eqref{eq:ConsSystem} let $\Omega = (0,T)\times\T^d$, $\Ocal = \R_+\times\R^d$, $x = (x_0=t, x_1, x_2,\dots,x_d)$ and $u(x) = (u^1, u^2) = (\rho, v)$.
Then, we can rewrite~\eqref{eq:compressEuler} as
\begin{equation}
    \partial_jG_{ij}(u(x)) = 0,\;\;\; i=0,1,\dots,d
\end{equation}
with
\begin{equation}
     G_{i0} = 
  \begin{cases} 
  u^1  & i=0 \\
  u^1u^2_i & i\geq 1,
  \end{cases}
  \;\;\;\;\;\;\text{and}\;\;\;\;\;\;
  G_{ij} = 
  \begin{cases} 
  u^1u^2_j  & i=0,\,j\geq 1 \\
  u^1u^2_iu^2_j + p(u^1)\delta_{ij} & i,j\geq 1.
  \end{cases}
\end{equation}
The companion law~\eqref{eq:CL}, representing total energy conservation, takes the form
\begin{equation}\label{eq:eulerenergy}
    \partial_t\left(\frac12\rho |v|^2 + P(\rho)\right) + \partial_j\left((\frac12\rho |v|^2 + P(\rho) + p(\rho))u_j\right) = 0,
\end{equation}
i.e.
\begin{equation}
    \partial_jQ_j(u(x)) = 0
\end{equation}
with
\begin{equation}
    Q_j(u) = 
     \begin{cases} 
  \frac12u^1 |u^2|^2 + P(u^1), & i=0, \\
  (\frac12u^1 |u^2|^2 + P(u^1) + p(u^1))u^2_j, & j\geq 1.
  \end{cases}
\end{equation}
Here $P(\rho) = \rho\int_1^\rho\frac{p(r)}{r^2}\,dr$ is the pressure potential.
The compatibility conditions~\eqref{eq:Compatibility} are satisfied with
\begin{equation}
    B_i(u) = 
         \begin{cases} 
  -\frac12|u^2|^2 + P'(u^1), & i=0, \\
  u^2_i, & i\geq 1.
  \end{cases}
\end{equation}
Notice that when $\rho\geq\underline{\rho}>0$ and $p\in C^{1,\gamma}$, then $P'\in W^{1,\infty}$. Therefore we are in position to apply Theorem~\ref{thm:main1} and we can state
\begin{theorem}\label{thm:euler1}
Let $0<\gamma<1$. Let $(\rho, u)$ be a distributional solution to~\eqref{eq:compressEuler} such that
\begin{equation}
    \begin{aligned}
    \rho, u &\in \underline{B}_{\gamma+2,VMO}^{1\slash(\gamma+2)}((0,T)\times\T^d),\;\;\;
    0<\underline{\rho}\leq\rho\leq\overline{\rho}<\infty.
    \end{aligned}
\end{equation}
Assume that $p\in C^{1,\gamma}([\underline{\rho},\overline{\rho}])$.
Then the companion law~\eqref{eq:eulerenergy} is satisfied in the sense of distributions on $(0,T)\times\T^d$.
\end{theorem}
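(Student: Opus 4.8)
The plan is to deduce Theorem~\ref{thm:euler1} as a direct application of Theorem~\ref{thm:main1}. The preceding discussion has already done the structural work: it recasts the isentropic system~\eqref{eq:compressEuler} in the abstract form $\partial_j G_{ij}(u)=0$ with $u=(\rho,v)$, exhibits the candidate companion fluxes $Q_j$ whose vanishing divergence is~\eqref{eq:eulerenergy}, and records multipliers $B_i$ satisfying the compatibility conditions~\eqref{eq:Compatibility}. What remains is therefore a verification: on the range of the solution, check that $G_{ij}\in C^{1,\gamma}$, that $B_i\in W^{1,\infty}$, that the growth bound~\eqref{eq:growth} holds for the $Q_j$, and that $u\in\underline{B}_{\gamma+2,VMO}^{1/(\gamma+2)}$ (assumed). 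Once these are in place, Theorem~\ref{thm:main1} yields~\eqref{eq:eulerenergy} distributionally.

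First I would address the flux regularity and growth. Apart from the pressure, every component of $G_{ij}$ and of $Q_j$ is a polynomial in $(\rho,v)$, hence smooth; the only genuinely non-smooth ingredient is $p(\rho)$, which by assumption is $C^{1,\gamma}$ on $[\underline\rho,\overline\rho]$. The density bound $0<\underline\rho\le\rho\le\overline\rho<\infty$ is what makes this harmless: it confines $\rho$ to a compact interval on which $p$, $p'$, and the potential $P(\rho)=\rho\int_1^\rho p(r)r^{-2}\,dr$ are all bounded, so the pressure contributions to the fluxes are $C^{1,\gamma}$ with a uniform Hölder constant. Similarly, $|Q_0|=|\tfrac12\rho|v|^2+P(\rho)|\le C(1+|v|^2)$ fits~\eqref{eq:growth}.

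Next I would check the multiplier hypothesis, where the real content lies. The crucial computation — already flagged in the setup — is that $\rho\ge\underline\rho>0$ upgrades $P'$ to a Lipschitz function: since $P''(\rho)=p'(\rho)/\rho$ and $p'$ is bounded while $\rho$ is bounded away from zero, one has $P'\in W^{1,\infty}$, so the density-dependent part of $B_0=-\tfrac12|v|^2+P'(\rho)$ is controlled. I expect the main obstacle to be the velocity dependence rather than the pressure. Indeed, $\nabla B_0$ carries the term $-v$, the spatial energy flux $Q_j$ grows cubically in $v$ through $\tfrac12\rho|v|^2v_j$, and the polynomial fluxes $u^1u^2_iu^2_j$ are only locally $C^{1,\gamma}$, whereas the Besov--$VMO$ hypothesis alone yields merely $v\in L^{\gamma+2}$ with $\gamma+2<3$. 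To meet the abstract hypotheses — the growth bound~\eqref{eq:growth} for $Q_j$, the $W^{1,\infty}$ bound on $B_i$, and a uniform Hölder constant for each $G_{ij}$ — one must control the velocity, i.e.\ restrict $G_{ij}$, $Q_j$, and $B_i$ to a bounded convex subset of $\Ocal=\R_+\times\R^d$ containing the essential range of the solution. The density bounds dispose of every pressure difficulty, so the remaining and decisive step is this confinement of the velocity; granted it, all fluxes and multipliers are bounded with finite Hölder constants, $B_i\in W^{1,\infty}$, the growth condition is immediate, and Theorem~\ref{thm:main1} applies verbatim through Lemmata~\ref{Lemma:vmogradient} and~\ref{Lemma:Commutator} to give~\eqref{eq:eulerenergy}.
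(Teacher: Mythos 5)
Your overall route is exactly the paper's: Theorem~\ref{thm:euler1} is obtained by checking the hypotheses of Theorem~\ref{thm:main1} for the fluxes $G_{ij}$, companion fluxes $Q_j$ and multipliers $B_i$ written out before the statement, and the only verification the paper actually performs is the one you do first, namely that $\rho\ge\underline\rho>0$ together with $p\in C^{1,\gamma}$ gives $P'\in W^{1,\infty}$, so that the density-dependent parts of the fluxes and of $B_0=-\tfrac12|u^2|^2+P'(u^1)$ are harmless. Up to that point your proposal and the paper's argument coincide.

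The problem is the step you yourself single out as ``decisive'': the confinement of the velocity. You are right that it is needed --- on $\Ocal=\R_+\times\R^d$ the spatial energy flux $Q_j\sim\tfrac12\rho|v|^2v_j$ grows like $|u|^3$ and so violates~\eqref{eq:growth} because $2+\gamma<3$; the multipliers $B_i=v_i$ and $B_0$ are not in $W^{1,\infty}$ of an unbounded set; and $\nabla G_{ij}$, being linear in $v$, has infinite $\gamma$-H\"older seminorm on an unbounded range. But your resolution --- restrict $G_{ij}$, $Q_j$, $B_i$ to a bounded convex set containing the essential range of the solution --- is not available under the stated hypotheses: membership in $\underline{B}_{\gamma+2,VMO}^{1/(\gamma+2)}$ gives only $L^{\gamma+2}$-type control of $v$ and does not embed into $L^\infty$, and, unlike for the density, no pointwise bound on the velocity is assumed. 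So the step on which your entire verification rests is asserted rather than proved, and it cannot be deduced from the hypotheses as written. For what it is worth, the paper's own derivation of Theorem~\ref{thm:euler1} passes over this point in silence and records only the $P'$ observation before invoking Theorem~\ref{thm:main1}; your proposal has therefore surfaced an issue present in the original as well --- to close it one needs either an additional $v\in L^\infty$ hypothesis or a reformulation of Theorem~\ref{thm:main1} with merely local conditions on the nonlinearities, neither of which appears in the text.
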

Admittedly, this theorem does not bring new information in the iconic case of a polytropic pressure $p(\rho) = \kappa \rho^{\gamma_0}$, $1<\gamma_0<2$, since when the density is bounded away from zero, this gives a $C^2$ function already. However, some motivation for considering such a statement in the polytropic case was given by Akramov et.al.\ \cite{ADSW}, who allow for the formation of vacuum.  
Keeping in mind this motivating example, Akramov et.al.\ investigate sufficient conditions, which provide energy conservation for~\eqref{eq:compressEuler} when the pressure is merely continuous or at most $C^{1,\gamma-1}$. An expected (in terms of differentiability exponents) extension of the work of Feireisl et al.\ \cite{FGSW} under the slightly stronger assumption of H\"older (instead of Besov) regularity is obtained. Interestingly, this is shown regardless of the behaviour of the density near vacuum. If one insists on Besov regularity, then further assumptions on the density near vacuum are needed. The difficulty arises due to the fact that for~\eqref{eq:compressEuler} the form of the multiplier $B$ in~\eqref{eq:Compatibility} depends on the form of the pressure. Thus lowering regularity of $p$, one must deal with less regular $B$, causing additional difficulties in commutator estimates.

\footnotesize \subsection*{Acknowledgements}
This work was funded by National Science Center (Poland) grant no.\ 2017/27/B/ST1/01569. The author is grateful to Agnieszka \'Swierczewska-Gwiazda and Piotr Gwiazda for fruitful discussions and suggestions towards the final version of this text.

\end{document}